\newtheorem{theorem}{Theorem}[section]
\newtheorem{corollary}{Corollary}[section]
\newtheorem{proposition}{Proposition}[section]
\newtheorem{definition}{Definition}[section]
\newtheorem{example}{Example}[section]
\renewcommand{\labelenumi}{(\theenumi)}
\begin{document}

\title{On $\alpha$-embedded subsets of products}
\tnotetext[t1]{The paper was submitted on August, 23, 2013 to the Central European Journal of Mathematics, but in spite of positive referee's report
 the manuscript was withdrawn due to financial demand for publication and resubmitted to the European Journal of Mathematics}

\author{Olena Karlova\corref{cor1}}\ead{maslenizza.ua@gmail.com}

\author{Volodymyr Mykhaylyuk}\ead{vmykhaylyuk@ukr.net}

\cortext[cor1]{Corresponding author}

\address{Department of Mathematical Analysis, Faculty of Mathematics and Informatics, Chernivtsi National University,
Kotsyubyns'koho str., 2, Chernivtsi, 58012, Ukraine}

\begin{keyword}
$\kappa$-invariant set,  pseudo-$\aleph_1$-compact, $\alpha$-embedded set

\MSC Primary 54B10, 54C45; Secondary  54C20, 54H05
\end{keyword}

\begin{abstract}
We prove that every continuous function $f:E\to Y$ depends on countably many coordinates, if $E$ is an $(\aleph_1,\aleph_0)$-invariant pseudo-$\aleph_1$-compact subspace of a product of topologi\-cal spaces and $Y$ is a space with a regular $G_\delta$-diagonal. Using this fact for any  $\alpha<\omega_1$ we construct an $(\alpha+1)$-embedded subspace of a completely regular space which is not $\alpha$-embedded.
\end{abstract}

\maketitle

\section{Introduction}

If $P$ is a property of functions, then by $P(X)$ ($P^*(X)$) we denote the collection of all real-valued (bounded) functions on a topological space $X$ with the property $P$. By the symbol $C$ we denote the property of continuity and let $B_\alpha$ be the property of being the function of the $\alpha$-th Baire class, where $0\le\alpha<\omega_1$.

Recall that a subset $A$ of a space $X$ is {\it functionally closed (open) in $X$}, if there is $f\in C^*(X)$ with $A=f^{-1}(0)$ ($A=X\setminus f^{-1}(0)$).

The system of all functionally open (closed) subsets of a space $X$ we denote by ${\mathcal G}_0^*$ (${\mathcal
F}_0^*$). Assume that the classes ${\mathcal G}_\xi^*$ and ${\mathcal F}_\xi^*$ are defined for all $\xi<\alpha$, where
$0<\alpha<\omega_1$. Then, if $\alpha$ is odd, the class ${\mathcal G}_\alpha^*$ (${\mathcal F}_\alpha^*$) consists of all countable intersections (unions) of sets of lower classes, and, if $\alpha$ is even the class  ${\mathcal G}_\alpha^*$ (${\mathcal F}_\alpha^*$) consists of all countable unions (intersections) of sets of lower classes. The classes ${\mathcal F}_\alpha^*$ for odd $\alpha$ and ${\mathcal G}_\alpha^*$ for even $\alpha$ are said to be
{\it functionally additive}, and the classes ${\mathcal F}_\alpha^*$ for even $\alpha$ and ${\mathcal G}_\alpha^*$ for odd $\alpha$ are called {\it functionally multiplicative}. A set $A$ is {\it functionally measurable}, if $A\in \bigcup\limits_{0\le\alpha<\omega_1}({\mathcal F}_\alpha^*\cup {\mathcal G}_\alpha^*)$. Notice that the $\sigma$-algebra of functionally measurable subsets of $X$ is also called the $\sigma$-algebra of Baire sets.

An important role in the extension theory play $z$-embedded sets (a subset $A$ of a topological space $X$ is {\it $z$-embedded} in $X$, if for any functionally closed set $F$ in $A$ there exists a functionally closed set $B$ in $X$ such that $B\cap A=F$). In \cite{KarlovaCMUC} for any $\alpha<\omega_1$ it was introduced the notion of an $\alpha$-embedded set, i.e. such a set $A\subseteq X$ that every its subset $B$ of the functionally multiplicative class $\alpha$ in $A$ is the restriction on $A$ of some set of the functionally multiplicative class $\alpha$ in $X$. Obviously, the class of $0$-embedded sets coincides with the class of  $z$-embedded sets. It is not hard to verify that any $\alpha$-embedded set is $\beta$-embedded, if $\alpha\le \beta$ \cite[Proposition 2.5]{KarlovaCMUC}. The converse proposition is not true as Theorem~2.6 from \cite{KarlovaCMUC} shows. In that theorem there was constructed the example of the $1$-embedded subset $E$ of the product $X=[0,1]\times\prod\limits_{t\in [0,1]}X_t$, where $X_t=\mathbb N$ for all $t\in [0,1]$, which is not $0$-embedded in $X$. In the given paper we generalize the above-mentioned result from \cite{KarlovaCMUC} and show that for any $\alpha<\omega_1$ there exists a set $E\subseteq X$, which is $(\alpha+1)$-embedded and is not $\alpha$-embedded in $X$.

The convenient tool in the investigation of properties of an $\alpha$-embedded subset $E$ of a product $\prod\limits_{t\in T}X_t$ is the fact that under some conditions on $E$ every continuous function $f:E\to\mathbb R$ depends on countably many coordinates (see definitions in Section~\ref{sec:def_and_aleph_comp}). S.~Mazur in~\cite{Masur} introduced invariant sets under projection (see Definition~\ref{def:invariant}(a)) and proved that every continuous function $f:E\to Y$ depends on countably many coordinates, if $E\subseteq \Sigma(a)$  for some $a\in E$ and $E$ is invariant under projection, $X_t$ is a metrizable separable space for each $t\in T$ and $Y$ is a Hausdorff space with a $G_\delta$-diagonal. R.~Engelking~\cite{Eng1} showed the same in the case, when $E$ is s set which is invariant under composition (see Definition~\ref{def:invariant}(b)) which is contained in $\Sigma(a)$ for some $a\in E$, $X_t$ is a $T_1$-space with countable base for each $t\in T$ and $Y$ is a Hausdorff space in which every one-point set is $G_\delta$ (see also~\cite{Hus}). N.~Noble and M.~Ulmer~\cite{NobleUlmer} obtained the dependence  on countable many coordinates of a continuous function $f:E\to Y$, if $E$ is a subset of a pseudo-$\aleph_1$-compact space $\prod\limits_{t\in T}X_t$, which contains $\sigma(a)$ for some $a\in E$ and $Y$ is a space with a regular $G_\delta$-diagonal. The result of Noble and Ulmer was generalized by W.W.~Comfort and I.S.~Gotchev in~\cite{ComfG}. Here we consider the so-called $(\aleph_1,\aleph_0)$-invariant subsets of products and, developing the methods of Mazur and of Noble and Ulmer, we show that every continuous function $f:E\to Y$ depends on countably many coordinates if $E$ is an $(\aleph_1,\aleph_0)$-invariant pseudo-$\aleph_1$-compact subspace of a product of topological spaces $X_t$  and $Y$ is a space with a regular $G_\delta$-diagonal.

\section{Some properties of pseudo-$\aleph_1$-compact invariant sets}\label{sec:def_and_aleph_comp}

Let $(X_t:t\in T)$ be a family of non-empty topological spaces, $X=\prod\limits_{t\in T}X_t$ and let $a=(a_t)_{t\in T}$ be a fixed point of $X$. For  $S\subseteq T$ we denote by $p_S$ the projection \mbox{$p_S:X\to\prod\limits_{t\in S}X_t$},  where $p_S(x)=(x_t)_{t\in S}$ for each $x=(x_t)_{t\in T}\in X$; by $x_S^a$ we denote the point with the coordinates $(y_t)_{t\in T}$, where $y_t=x_t$, if $t\in S$ and $y_t=a_t$, if $t\in T\setminus S$. For a basic open set $U=\prod\limits_{t\in T}U_t\subseteq X$ let $N(U)=\{t\in T: U_t\ne X_t\}$.

\begin{definition}\label{def:invariant}
  {\rm A set $E\subseteq X$ is called
  \renewcommand{\theenumi}{\alph{enumi}}
  \renewcommand{\labelenumi}{\theenumi)}
  \begin{enumerate}
    \item {\it invariant under projection} \cite{Masur}, if  $x_S^a\in E$ for any $x\in E$ and $S\subseteq T$;

    \item {\it invariant under composition} \cite{Eng1}, if for any $x,y\in E$ and $S\subseteq T$ we have $z=(z_t)_{t\in T}\in E$, where $z_t=x_t$ for every $t\in S$ and $z_t=y_t$ for every $t\in T\setminus S$.
   \end{enumerate}}
\end{definition}
Clearly, every invariant under composition set $E$ is invariant under projection for any  $a\in E$.

Following Engelking \cite{Eng1}, M.~Hu\v{s}ek in \cite[p.~132]{Hus} introduced a notion of a $\kappa$-invariant set for $\kappa\ge\aleph_0$ as follows.
\begin{definition}
  {\rm A set $E$ is {\it $\kappa$-invariant}, if for any $x,y\in E$ and $S\subseteq T$ with $|S|<\kappa$ there is a point $z\in E$ such that $z_t=x_t$ for every $t\in S$ and $z_t=y_t$ for every $t\in T\setminus S$.  }
\end{definition}

Developing the above-mentioned concepts of Mazur and Hu\v{s}ek, we introduce the following notions.
 \begin{definition}\label{def:invariant-new}
  {\rm Let $\aleph_i$ and $\aleph_j$ be infinite cardinals, $E\subseteq X$ and $a\in E$. Then $E$ is called
  \begin{enumerate}
    \renewcommand{\theenumi}{\alph{enumi}}
  \renewcommand{\labelenumi}{\theenumi)}
  \item {\it $\aleph_i$-invariant  with respect to the point $a$}, if $x_S^a\in E$ for every $x\in E$ and $S\subseteq T$ with $|S|<\aleph_i$;

 \item  {\it $(\aleph_i,\aleph_j)$-invariant with respect to the point $a$}, if $x_{S_1}^a\in E$ and $x_{T\setminus S_2}^a\in E$ for any point $x\in E$ and for any sets $S_1,S_2\subseteq T$ with $|S_1|<\aleph_i$ and $|T\setminus S_2|<\aleph_j$.
  \end{enumerate}}
\end{definition}
Obviously,   every  $(\aleph_i,\aleph_j)$-invariant set with respect to $a$ is $\aleph_i$-invariant  with respect to $a$.

\begin{definition}
   {\rm A topological space $X$ is said to be
   \begin{itemize}
   \item {\it pseudo-$\aleph_1$-compact}, if any locally finite family of open subsets of $X$ is at most countable;

   \item {\it hereditarily pseudo-$\aleph_1$-compact}, if each subspace of $X$ is pseudo-$\aleph_1$-compact.
   \end{itemize}}
 \end{definition}

It is easy to check that continuous mappings preserve the pseudo-$\aleph_1$-com\-pact\-ness.

The following theorem gives a characterization of the pseudo-$\aleph_1$-compactness of $\aleph_0$-invariant sets and is an analogue of the similar result of Noble and Ulmer \cite[Corollary 1.5]{NobleUlmer} for products.

\begin{theorem}\label{th:char_pseudo}
  Let $(X_t:t\in T)$ be a family of topological spaces, $X=\prod\limits_{t\in T}X_t$, $a\in X$ and let $E\subseteq X$ be an $\aleph_0$-invariant set with respect to $a$. Then the following conditions are equivalent:
  \begin{enumerate}
  \renewcommand{\theenumi}{\roman{enumi}}
    \item  $E$ is pseudo-$\aleph_1$-compact;\label{th:char_pseudo_item1}

    \item for any finite non-empty set $S\subseteq T$ and for any uncountable family $(U_i:i\in I)$ of open sets $U_i$ in $X$ with $U_i\cap E\ne \emptyset$ the family $(p_S(U_i\cap E):i\in I)$ is not locally finite in $p_S(E)$.\label{th:char_pseudo_item2}
  \end{enumerate}
\end{theorem}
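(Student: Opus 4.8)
The plan is to establish the two implications separately; the forward direction (\ref{th:char_pseudo_item1})$\Rightarrow$(\ref{th:char_pseudo_item2}) is elementary and uses no invariance, while the reverse (\ref{th:char_pseudo_item2})$\Rightarrow$(\ref{th:char_pseudo_item1}) carries the content through a $\Delta$-system (sunflower) argument married to the $\aleph_0$-invariance of $E$.

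For (\ref{th:char_pseudo_item1})$\Rightarrow$(\ref{th:char_pseudo_item2}) I would argue by contraposition. Suppose (\ref{th:char_pseudo_item2}) fails, so there are a finite non-empty $S\subseteq T$ and an uncountable family $(U_i:i\in I)$ of open sets in $X$ with $U_i\cap E\ne\emptyset$ for which $(p_S(U_i\cap E):i\in I)$ is locally finite in $p_S(E)$. Writing $q=p_S|_E:E\to p_S(E)$, which is continuous, I would pull the local finiteness back: if $V\subseteq p_S(E)$ is an open neighbourhood of $q(e)$ meeting only finitely many $p_S(U_i\cap E)$, then $q^{-1}(V)$ is an open neighbourhood of $e$ in $E$ meeting only finitely many $U_i\cap E$ (since $q^{-1}(V)\cap(U_i\cap E)\ne\emptyset$ forces $V\cap p_S(U_i\cap E)\ne\emptyset$). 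Thus $(U_i\cap E:i\in I)$ is an uncountable locally finite family of non-empty open subsets of $E$; since a locally finite indexed family of non-empty sets cannot repeat a single member uncountably often, it has uncountably many distinct members, contradicting the pseudo-$\aleph_1$-compactness of $E$.

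For the substantive direction (\ref{th:char_pseudo_item2})$\Rightarrow$(\ref{th:char_pseudo_item1}), again by contraposition, I would start from an uncountable locally finite family of non-empty open sets in $E$; shrinking each member to a smaller set of the form $U_i\cap E$ with $U_i$ a \emph{basic} open set of $X$ keeps the family locally finite, uncountable and non-empty, and makes each support $N(U_i)$ finite. Applying the $\Delta$-system lemma to the uncountable family of finite sets $(N(U_i):i\in I)$ yields an uncountable $I'\subseteq I$ for which $(N(U_i):i\in I')$ is a sunflower with finite root $R$ and pairwise disjoint petals $P_i=N(U_i)\setminus R$. I then set $S=R$ if $R\ne\emptyset$, and otherwise let $S=\{t_0\}$ for a single $t_0$ lying in no petal (discarding the at most one offending index); in either case $S$ is finite and non-empty and $S\supseteq R$. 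The goal becomes to show that $(p_S(U_i\cap E):i\in I')$ is locally finite in $p_S(E)$, which contradicts (\ref{th:char_pseudo_item2}).

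To verify this, fix $y\in p_S(E)$ and, using $\aleph_0$-invariance, choose a preimage $e\in E$ with $p_S(e)=y$ and $e_t=a_t$ for every $t\notin S$ (replace any preimage $e'$ by $(e')_S^a\in E$). Local finiteness at $e$ gives a basic open $G\ni e$ whose trace $G\cap E$ meets $U_i\cap E$ only for $i$ in a finite set $F$, and I would take $V=\bigl(\prod\limits_{t\in S}G_t\bigr)\cap p_S(E)$ as the candidate neighbourhood of $y$. The key computation is a transplant: whenever $u\in U_i\cap E$ has $p_S(u)\in V$, the point $u^*=u_{\,S\cup N(U_i)}^{\,a}$ lies in $E$ by invariance and in $U_i$ by construction, and checking $u^*\in G$ coordinate by coordinate on $N(G)$ succeeds on $S$ (from $p_S(u)\in V$) and off $S\cup N(U_i)$ (from $e_t=a_t\in G_t$), the \emph{only} possible obstruction being coordinates $t\in N(G)\cap P_i$. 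This petal obstruction is precisely the main difficulty, and it is defeated by the sunflower structure: since the petals are pairwise disjoint and $N(G)$ is finite, $N(G)$ meets $P_i$ for only finitely many $i$ (a finite set $F'$). Hence for $i\in I'\setminus(F\cup F')$ one gets $u^*\in G\cap U_i\cap E$, which would force $i\in F$, a contradiction; so $V$ meets $p_S(U_i\cap E)$ only for $i\in F\cup F'$, proving local finiteness. The heart of the argument is thus the coordination of a \emph{single} finite projection $S$ across uncountably many members: the $\Delta$-system isolates the shared coordinates $R$, while invariance normalizes the chosen preimage to agree with $a$ off $S$, so that the disjoint petals of the remaining sets cannot interfere with one fixed finite neighbourhood.
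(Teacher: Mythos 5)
Your proof is correct and follows essentially the same route as the paper's: both hinge on \v{S}anin's lemma (the $\Delta$-system lemma) applied to the supports $N(U_i)$, combined with the transplant $u\mapsto u^{a}_{S\cup N(U_i)}$ licensed by $\aleph_0$-invariance, with disjointness of the petals neutralizing all but finitely many indices; you merely run the second implication contrapositively (from a locally finite family to a violation of condition (ii)), whereas the paper argues directly from (ii) to a cluster point of the given family. Your explicit treatment of the empty-root case (taking $S=\{t_0\}$ off all petals) and of possible repetitions in the indexed family are small points the paper's proof passes over silently.
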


\begin{proof}
  $(\ref{th:char_pseudo_item1}) \Rightarrow (\ref{th:char_pseudo_item2})$. Let $S\subseteq T$ be a finite non-empty set, $(U_i:i\in I)$ be an uncountable family of basic open sets $U_i$ in $X$ with $U_i\cap E\ne \O$ and let $V_i=p_S(U_i\cap E)$ for each $i\in I$. If the family $(V_i:i\in I)$ is locally finite in $p_S(E)$, then the family $(p_S^{-1}(V_i)\cap E:i\in I)$ is locally finite in $E$ and $U_i\cap E\subseteq p_S^{-1}(V_i)\cap E$ for each $i\in I$, which contradicts to the  pseudo-$\aleph_1$-compactness of  $E$.

  $(\ref{th:char_pseudo_item2}) \Rightarrow (\ref{th:char_pseudo_item1})$. Consider an uncountable family $(U_i=\prod\limits_{t\in T} U_i^t:i\in I)$ of basic open sets in $X$ such that $U_i\cap E\ne\O$ for all $i\in I$. By \v{S}anin's lemma \cite{Shanin} we choose a finite set $Z$ and uncountable set $J\subseteq I$ such that $N(U_i)\cap N(U_j)=Z$ for all distinct  $i,j\in J$.

  Let $V_i=p_Z(U_i\cap E)$ for all $i\in J$.  It follows from (\ref{th:char_pseudo_item2}) that the family $(V_i:i\in J)$ has a cluster point $v\in p_Z(E)$. Take $y\in E$ such that $v=p_Z(y)$ and put $x=y_Z^a$. We shall show that $x$ is a cluster point of $(U_i\cap E:i\in J)$. Indeed, let $W=\prod\limits_{t\in T}W_t$ be a basic open neighborhood of $x$ in $X$ and  $V=\prod\limits_{t\in Z}W_t\cap p_Z(E)$. Choose such an infinite set $K\subseteq J$ that $V\cap V_i\ne\O$ and $N(W)\cap N(U_i)\subseteq Z$ for all $i\in K$. Take an arbitrary $i\in K$ and a point $b\in V\cap V_i$. Consider a point $c\in U_i\cap E$ with $b=p_Z(c)$ and put $d=c_{Z\cup N(U_i)}^a$. Clearly,  $d\in U_i$. Since $E$ is $\aleph_0$-invariant with respect to $a$ and $c\in E$, $d\in E$. Moreover,  $p_Z(d)=p_Z(c)=b\in V$ and $d_t=a_t\in W_t$ for every $t\in N(W)\setminus Z$. Therefore, $d\in W$. Hence, $d\in W\cap E\cap U_i$.
\end{proof}

The example below shows that the condition (\ref{th:char_pseudo_item2}) in the previous theorem can not be weakened to the following: {\it
the set $p_S(E)$ is pseudo-$\aleph_1$-compact for any non-empty finite set $S\subseteq T$.}

\begin{example}\label{ex:1}
There exists an $(\aleph_1,\aleph_1)$-invariant set $E\subseteq\prod\limits_{t\in T}X_t$ with respect to a point $a\in E$ such that $p_S(E)$ is pseudo-$\aleph_1$-compact for any non-empty finite set  $S\subseteq T$, but $E$ is not pseudo-$\aleph_1$-compact.
\end{example}

\begin{proof}
 Let $T=[0,1]$, $X_0=\mathbb P=\mathbb R\times [0,+\infty)$ be the Niemytzki plane~\cite[p.~21]{Eng}, $X_t=\{0,1\}$ for each $t\in (0,1]$, $X=\prod\limits_{t\in T}X_t$ and let $a=(a_t)_{t\in T}\in X$, where $a_t=0$ for each $t\in (0,1]$ and $a_0=(0,0)$.
 For each $t\in (0,1]$ we define $y^{(t)}=(y^{(t)}_s)_{s\in T}$ and $z^{(t)}=(z^{(t)}_s)_{s\in T}\in X$ as follows:
  \begin{equation*}
 y^{(t)}_s=\left\{\begin{array}{lll}
                         0, & s\in(0,1]\setminus \{t\}\\
                         1, & s=t\\
                         (t,0), & s=0,
                       \end{array}
 \right.
\end{equation*}
\begin{equation*}
 z^{(t)}_s=\left\{\begin{array}{lll}
                         0, & s\in(0,1]\setminus \{t\}\\
                         1, & s=t\\
                         (0,0), & s=0.
                       \end{array}
 \right.
\end{equation*}
Consider the $(\aleph_1,\aleph_1)$-invariant set  $$E=\{y^{(t)}:t\in(0,1]\}\cup \{z^{(t)}:t\in(0,1]\}\cup(X_0\times \prod\limits_{t\in (0,1]}\{0\})$$ with respect to the point $a$. Observe that for any finite set $S\subseteq [0,1]$ the sets $p_S(\{y^{(t)}:t\in(0,1]\})$ and $p_S(\{z^{(t)}:t\in(0,1]\})$ are finite and the set $p_S(X_0\times \prod\limits_{t\in (0,1]}\{0\})$ is separable. Hence, $E$ satisfies the condition mentioned above. But $(\{y^{(t)}\}:t\in (0,1])$ is a locally finite family of open sets in $E$. Therefore,  $E$ is not pseudo-$\aleph_1$-compact.
\end{proof}

\section{Dependence on countably many coordinates of continuous mappings}

\begin{definition}
  {\rm Let $E\subseteq \prod\limits_{t\in T}X_t$. A function $f:E\to Y$  {\it depends on a set $S\subseteq T$} \cite[p.~231]{ComfNerg}, if for all $x,y\in E$ the equality $p_S(x)=p_S(y)$ implies $f(x)=f(y)$. If $|S|\leq \aleph_0$, then we say that $f$ {\it depends on a countably many coordinates}. Similarly, $E$ {\it depends on $S$}, if for all $x\in E$ and $y\in X$ with $p_S(x)=p_S(y)$ we have $y\in E$.}
\end{definition}

\begin{definition}
{\rm  A space $Y$ has {\it a regular $G_\delta$-diagonal} \cite{Zenor}, if there exists a sequence $(G_n)_{n=1}^\infty$ of open subsets of $Y^2$ such that
  \begin{equation}\label{eq:diag}
  \{(y,y):y\in Y\}=\bigcap\limits_{n=1}^\infty G_n=\bigcap\limits_{n=1}^\infty \overline{G_n}.
  \end{equation}}
\end{definition}

We denote $\sigma(a)=\{x\in X:|t\in T:x_t\ne a_t|<\aleph_0\}$ as in \cite{Cors}.

\begin{theorem}\label{th:1imp2}
  Let $Y$ be a space with a regular $G_\delta$-diagonal, $(X_t:t\in T)$ be a family of topological spaces, $X=\prod\limits_{t\in T}X_t$, $a\in X$ and let $E\subseteq X$ be a pseudo-$\aleph_1$-compact subspace which is $(\aleph_1,\aleph_0)$-invariant with respect to $a$. Then for any continuous mapping $f:E\to Y$ there exist a countable set  $T_0\subseteq T$ and a continuous mapping $f_0:p_{T_0}(E)\to Y$ such that $f=f_0\circ (p_{T_0}|_E)$.

In particular, $f$ depends on countably many coordinates.
\end{theorem}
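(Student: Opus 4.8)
The plan is to split the statement into two tasks that are logically independent: (i) producing a \emph{countable} set $T_0\subseteq T$ on which $f$ depends, and (ii) manufacturing the continuous factor $f_0$. Task (ii) is purely formal once (i) is in hand. First I would record the reformulation that, since the $(\aleph_1,\aleph_0)$-invariance makes $E$ in particular $\aleph_1$-invariant with respect to $a$, for every countable $S\subseteq T$ and every $x\in E$ the point $x_S^a$ lies in $E$; consequently $f$ depends on $S$ if and only if $f(x)=f(x_S^a)$ for all $x\in E$. For (ii), given a countable $T_0$ on which $f$ depends, set $E_0=\{x_{T_0}^a:x\in E\}\subseteq E$. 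The map $p_{T_0}|_{E_0}\colon E_0\to p_{T_0}(E)$ is a continuous bijection whose inverse $s\mapsto s_{T_0}^a$ is continuous coordinatewise, hence a homeomorphism; putting $f_0=(f|_{E_0})\circ(p_{T_0}|_{E_0})^{-1}$ yields a continuous map with $f_0\circ(p_{T_0}|_E)=f$, because $f(x)=f(x_{T_0}^a)$. Thus the whole problem reduces to constructing $T_0$.

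Second, I would construct $T_0$ by contradiction. Assuming no countable set works, I build by transfinite recursion on $\xi<\omega_1$ an increasing chain of countable sets $S_\xi$ (continuous at limits) together with witnesses $x^\xi\in E$ satisfying $f(x^\xi)\neq f((x^\xi)_{S_\xi}^a)$; these two points differ only on $D_\xi=\{t\in T\setminus S_\xi:x^\xi_t\neq a_t\}\subseteq T\setminus S_\xi$. Using that the net $((x^\xi)_{T\setminus F}^a)$ over finite $F\subseteq D_\xi$ converges to $(x^\xi)_{S_\xi}^a$, continuity of $f$, and the regular $G_\delta$-diagonal (choose $n_\xi$ with $(f(x^\xi),f((x^\xi)_{S_\xi}^a))\notin\overline{G_{n_\xi}}$; the \emph{closedness} supplied by regularity is essential here), I extract a finite nonempty $F_\xi\subseteq D_\xi$ so that, with $y^\xi:=(x^\xi)_{T\setminus F_\xi}^a\in E$ (membership here uses the $\aleph_0$-component of $(\aleph_1,\aleph_0)$-invariance, i.e.\ finite resets to $a$), one has $(f(x^\xi),f(y^\xi))\notin\overline{G_{n_\xi}}$. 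Setting $S_{\xi+1}=S_\xi\cup F_\xi$ keeps the ``fresh'' sets $F_\xi\subseteq T\setminus S_\xi$ pairwise disjoint. Writing $r_\xi(z)=z_{T\setminus F_\xi}^a$, a continuous self-map of $X$ carrying $E$ into $E$, I pick open $P_\xi\ni f(x^\xi)$ and $Q_\xi\ni f(y^\xi)$ with $(P_\xi\times Q_\xi)\cap\overline{G_{n_\xi}}=\emptyset$ and a basic open $U_\xi\ni x^\xi$ with $f(U_\xi\cap E)\subseteq P_\xi$ and $f(r_\xi(U_\xi\cap E))\subseteq Q_\xi$ (continuity of $f$ and of $f\circ r_\xi$ at $x^\xi$). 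Passing to an uncountable $J\subseteq\omega_1$ on which $n_\xi\equiv n^*$, I obtain uncountably many nonempty open sets $U_\xi\cap E$ with pairwise disjoint fresh supports $F_\xi$ and the key property that $u\in U_\xi\cap E$ forces $(f(u),f(r_\xi(u)))\notin\overline{G_{n^*}}$.

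Third, I would clash this with pseudo-$\aleph_1$-compactness. The family $(U_\xi\cap E:\xi\in J)$ is an uncountable family of nonempty open subsets of the pseudo-$\aleph_1$-compact space $E$, hence not locally finite, so it has a cluster point $p\in E$ (an extraction parallel to the proof of Theorem~\ref{th:char_pseudo}): every neighbourhood of $p$ meets $U_\xi\cap E$ for infinitely many $\xi\in J$. Fix a basic neighbourhood $W\ni p$ with finite support $C=N(W)$; since the $F_\xi$ are pairwise disjoint, $F_\xi\cap C\neq\emptyset$ for only finitely many $\xi$, so I may pick $i\in J$ with $F_i\cap C=\emptyset$ and a point $u\in W\cap U_i\cap E$. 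Then $u':=r_i(u)\in E$ agrees with $u$ off $F_i$, hence lies in $W$, while $f(u)\in P_i$ and $f(u')\in Q_i$ give $(f(u),f(u'))\notin\overline{G_{n^*}}$. Letting $W$ shrink yields a net $(u,u')$ with $u,u'\to p$, so $f(u),f(u')\to f(p)$; thus the net $(f(u),f(u'))$ lies in the open set $Y^2\setminus\overline{G_{n^*}}$ yet converges to $(f(p),f(p))$. Since $(f(p),f(p))$ lies on the diagonal $\Delta\subseteq G_{n^*}$ and the open $G_{n^*}$ is disjoint from $Y^2\setminus\overline{G_{n^*}}$, the point $(f(p),f(p))$ cannot lie in the closure of that set, a contradiction. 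Hence some countable $T_0$ exists and the theorem follows.

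I expect the main obstacle to be the coordination built into the second and third steps: arranging each witness so that one continuous map $r_\xi$ transports the \emph{closed} separation across an entire neighbourhood $U_\xi$ (not merely at the single point $x^\xi$), and keeping the fresh sets $F_\xi$ pairwise disjoint so that at the cluster point $p$ the perturbation $u\mapsto r_i(u)$ never leaves $W$. This is precisely where the regular $G_\delta$-diagonal (closedness of $\overline{G_{n^*}}$), pseudo-$\aleph_1$-compactness (existence of $p$), and the two directions of $(\aleph_1,\aleph_0)$-invariance are forced to collide.
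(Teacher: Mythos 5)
Your proof is correct, and the engine of the contradiction is the same as the paper's (uncountably many pairs of points of $E$, each pair differing on a small set of coordinates and with $f$-values uniformly outside some fixed $\overline{G_{n^*}}$, clashed against a cluster point supplied by pseudo-$\aleph_1$-compactness and the continuity of $f$ there), but the organization is genuinely different. The paper defines $T_0$ up front as the set of coordinates $t$ at which a \emph{single-coordinate} perturbation of a point of $E\cap\sigma(a)$ (with $x^t_t=a_t$) changes the value of $f$; it proves this set countable, and then needs a second phase to show that $f$ really depends on $T_0$ --- first for points of $E\cap\sigma(a)$ by resetting finitely many coordinates one at a time, then for arbitrary points of $E$ via the density of $E\cap\sigma(a)$ in $E$. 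You instead negate the conclusion and run a transfinite recursion of length $\omega_1$, harvesting finite, pairwise disjoint perturbation supports $F_\xi$; this collapses the two phases into one and dispenses with both the restriction to $\sigma(a)$ and the density argument, at the cost of the bookkeeping of the increasing chain $S_\xi$ and of transporting the closed separation over a whole neighbourhood via the retraction $r_\xi$ (the paper achieves the analogous uniformization with the matched basic boxes $U^t,V^t$ satisfying $p_s(U^t)=p_s(V^t)$ for $s\ne t$). Your factorization step, exhibiting $p_{T_0}|_{E_0}$ as a homeomorphism by means of the continuous section $s\mapsto s^a_{T_0}$, is a tidier packaging of the paper's net verification that $f_0$ is continuous. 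Both arguments invoke the two halves of $(\aleph_1,\aleph_0)$-invariance (countable resets to land $x^a_{S_\xi}$ in $E$, finite resets for $r_\xi$ and the density/one-at-a-time steps), and both use the closedness built into the regular $G_\delta$-diagonal at exactly the point where the separating index $n_\xi$ is chosen.
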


\begin{proof}
Let $(G_n)_{n=1}^\infty$ be a sequence of open sets in $Y^2$ which satisfies~(\ref{eq:diag}) and let  $f:E\to Y$ be a continuous function.  Denote by $T_0$ the set of all
$t\in T$ for which there exist points $x^t,y^t\in E\cap\sigma(a)$ such that
\begin{gather}
x^t_s=y^t_s \mbox{\,\,\,for all \,\,\,} s\ne t,\\
x^t_t=a_t,\label{gath:3}\\
f(x^t)\ne f(y^t).
\end{gather}

Assume that $T_0$ is uncountable and choose an uncountable subset $B\subseteq T_0$ and a number $n_0\in\mathbb N$ such that
\begin{gather*}
  (f(x^t),f(y^t))\in Y^2\setminus \overline{G}_{n_0}\,\,\, \mbox{for all\,\,\,} t\in B.
\end{gather*}
Using the continuity of $f$ at $x^t$ and $y^t$ for every $t\in B$, we find such open basic neighborhoods $U^t$ and $V^t$ of $x^t$ and $y^t$, respectively, that
\begin{gather}
  p_s(U^t)=p_s(V^t)\,\,\,\mbox{for}\,\, s\ne t,\,\,\,\label{gath:2}\\
  f(U^t\cap E)\times f(V^t\cap E)\subseteq Y^2\setminus \overline{G}_{n_0}.\label{gath:1}
\end{gather}

Since $E$ is pseudo-$\aleph_1$-compact and the family $(V^t\cap E:t\in B)$ is uncountable, there exists a point $x^*\in E$ such that for any basic open neighborhood  $W$ of $x^*$ the set $C_W=\{t\in B: V^t\cap E\cap W\ne\O\}$ is infinite.
The continuity of $f$ at $x^*$ implies that there is such a basic neighborhood $W$ of $x^*$  that $f(W\cap E)\times f(W\cap E)\subseteq G_{n_0}$. Notice that $C=C_W\setminus N(W)\ne\O$. Fix $t\in C$ and $y\in V^t\cap E\cap W$. Let $x=y^a_{T\setminus\{t\}}$. Then (\ref{gath:3}) and (\ref{gath:2}) imply that $x\in U^t$. Since $E$ is $(\aleph_1,\aleph_0)$-invariant with respect to $a$, $x\in  E$. Moreover, $x\in W$, since $t\not\in N(W)$. Then $(f(x),f(y))\in G_{n_0}$, which contradicts~(\ref{gath:1}). Hence, the set $T_0$ is countable.

We show that $f$ depends on $T_0$. To do this it is sufficient to check the equality $f(x)=f(x^a_{T_0})$ for every $x\in E$.
Consider the case $x\in E\cap \sigma(a)$. Let $\{t\in T\setminus T_0: x_t\ne a_t\}=\{t_1,\dots,t_m\}$. Then
\begin{gather*}
  f(x)=f(x^a_{T\setminus\{t_1\}})=f((x^a_{T\setminus\{t_1\}})^a_{T\setminus \{t_2\}})=\dots=\\
  =f(((x^a_{T\setminus\{t_1\}})\dots)^a_{T\setminus \{t_m\}})=f(x^a_{T_0}).
\end{gather*}
Now let $x\in E$. Notice that $E\cap \sigma(a)$ is a dense set in $E$. Indeed, if $b=(b_t)_{t\in T}\in E$ and $W$ is a basic open neighborhood of $b$ in  $X$, then $b^a_{N(W)}\in W\cap E\cap\sigma(a)$. Hence, there exists a net $(x_i)$ of points $x_i\in E\cap\sigma(a)$ such that $\lim\limits_i x_i=x$. Then $\lim\limits_i (x_i)^a_{T_0}=x^a_{T_0}$.
It follows from the continuity of $f$ that
$$
f(x)=f(\lim\limits_i x_i)=\lim\limits_i f(x_i)=\lim\limits_i f((x_i)^a_{T_0})=f(\lim\limits_i (x_i)^a_{T_0})=f(x^a_{T_0}).
$$

Consider the function $f_0:p_{T_0}(E)\to Y$ defined by $f_0(z)=f(x)$, if $z=p_{T_0}(x)$ for $x\in E$.
Observe that $f_0$ is defined correctly, because $f$ depends on $T_0$. It remains to prove that  $f_0$ is continuous on $p_{T_0}(E)$. Fix $z\in p_{T_0}(E)$ and a net $(z_i)$ of points $z_i\in  p_{T_0}(E)$ such that $\lim\limits_i z_i=z$. Take  $x\in E$ and $x_i\in E$ with $z=p_{T_0}(x)$ and $z_i=p_{T_0}(x_i)$. Let $y_i=(x_i)^a_{T_0}$ and $y=x^a_{T_0}$. Then $y_i,y\in E$ and $\lim\limits_i y_i=y$. Moreover, since $f$ is continuous at $y$, we have
$$
\lim\limits_i f_0(z_i)=\lim\limits_i f(x_i)=\lim\limits_i f(y_i)=f(y)=f(x)=f_0(z).
$$
Hence, $f_0$ is continuous at $z$.
\end{proof}

Notice that the proof of the dependence of $f$ on $T_0$ in Theorem~\ref{th:1imp2} is similar to the proof of Lemma 2.32 and Lemma 2.27(a) in~\cite{ComfG1}.

\begin{theorem}
  Let $(X_t:t\in T)$ be an uncountable family of topological spaces, $X=\prod\limits_{t\in T}X_t$, $a\in X$ and let $E\subseteq X$ be an $(\aleph_1,\aleph_0)$-invariant set with respect to $a$. Consider the following conditions:
  \begin{enumerate}
  \renewcommand{\theenumi}{\roman{enumi}}
    \item $E$ is pseudo-$\aleph_1$-compact;\label{enum11}

    \item for any space $Y$ with a regular $G_\delta$-diagonal and for any continuous mapping  $f:E\to Y$ there exist a countable set $T_0\subseteq T$ and a continuous mapping $f_0:p_{T_0}(E)\to Y$ such that $f=f_0\circ (p_{T_0}|_E)$;\label{enum12}

    \item for any continuous function $f:E\to \mathbb R$ there exist a countable set $T_0\subseteq T$ and a continuous mapping $f_0:p_{T_0}(E)\to \mathbb R$ such that $f=f_0\circ (p_{T_0}|_E)$.\label{enum13}
  \end{enumerate}
  Then (\ref{enum11}) $\Rightarrow$ (\ref{enum12}) $\Rightarrow$ (\ref{enum13}).

  If $E$ is completely regular and
  \begin{enumerate}\setcounter{enumi}{3}
  \renewcommand{\theenumi}{\roman{enumi}}
   \item for any non-empty open set $U$ in $E$ there exists an uncountable set $T_U\subseteq T$ such that for every $t\in T_U$ there are  $y^{(t)}=(y^{(t)}_s)_{s\in T}, z^{(t)}=(z^{(t)}_s)_{s\in T}\in U$ with $y^{(t)}_t\ne z^{(t)}_t$ and $y^{(t)}_s= z^{(t)}_s$ for every $s\in T\setminus\{t\}$,\label{enum14}
  \end{enumerate}
   then (\ref{enum13}) $\Rightarrow$ (\ref{enum11}).
\end{theorem}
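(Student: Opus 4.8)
The implication (i) $\Rightarrow$ (ii) is precisely the content of Theorem~\ref{th:1imp2}, and (ii) $\Rightarrow$ (iii) is immediate on taking $Y=\mathbb R$, since every metric space has a regular $G_\delta$-diagonal (use $G_n=\{(x,y):|x-y|<1/n\}$). So the whole force of the statement lies in (iii) $\Rightarrow$ (i), which I would prove by contraposition. Assume $E$ is not pseudo-$\aleph_1$-compact and fix an uncountable locally finite family $(U_i:i\in I)$ of non-empty open subsets of $E$. The goal is to manufacture a single continuous $f:E\to\mathbb R$ which depends on no countable set of coordinates; by the definition of dependence this contradicts (\ref{enum13}) and hence forces $E$ to be pseudo-$\aleph_1$-compact.

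The first task is to replace $(U_i)$ by a pairwise disjoint uncountable subfamily, since disjoint supports are what will let me evaluate $f$ cleanly on the relevant points. For each $i$ pick $x_i\in U_i$; local finiteness makes the set $\{x_i:i\in I\}$ closed and discrete, so each $x_i$ has an open neighbourhood $V_i$ meeting only finitely many members of $(U_i)$. Put $W_i=U_i\cap V_i$: this is again an uncountable locally finite family of non-empty open sets, but now it is \emph{star-finite}, i.e. each $W_i$ meets only finitely many $W_j$. Hence the graph on $I$ whose edges are the pairs with $W_i\cap W_j\ne\O$ has all vertices of finite degree; its connected components are therefore countable, so an uncountable vertex set forces uncountably many components, and choosing one index from each component yields an uncountable $I'\subseteq I$ with $(W_i:i\in I')$ pairwise disjoint.

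Next I bring in condition (\ref{enum14}) to attach distinct coordinates. Well-ordering $I'$ in type $\omega_1$, I choose by recursion for each $i\in I'$ a coordinate $t_i\in T_{W_i}$ distinct from all previously chosen ones — possible because $T_{W_i}$ is uncountable while fewer than $\aleph_1$ coordinates have been used — together with the accompanying points $u_i,v_i\in W_i$ that agree off $t_i$ and differ at $t_i$. Using complete regularity I separate $u_i$ from the closed set $(E\setminus W_i)\cup\{v_i\}$, obtaining a continuous $\varphi_i:E\to[0,1]$ with $\varphi_i(u_i)=1$ and $\varphi_i\equiv 0$ on $(E\setminus W_i)\cup\{v_i\}$; here I use that singletons are closed, which is where the (Tychonoff) complete regularity assumption on $E$ enters. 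I then set $f=\sum_{i\in I'}\varphi_i$. Local finiteness of $(W_i:i\in I')$ guarantees that on a neighbourhood of each point only finitely many summands are non-zero, so $f$ is a well-defined continuous real function, and pairwise disjointness gives $f(u_i)=\varphi_i(u_i)=1$ and $f(v_i)=\varphi_i(v_i)=0$ for every $i\in I'$.

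Finally I check that $f$ depends on no countable set. Given any countable $T_0\subseteq T$, uncountably many $t_i$ lie outside $T_0$; fixing such an $i$, the points $u_i,v_i$ agree on $T\setminus\{t_i\}\supseteq T_0$, so $p_{T_0}(u_i)=p_{T_0}(v_i)$ while $f(u_i)=1\ne 0=f(v_i)$. Thus $f$ does not depend on $T_0$, and as $T_0$ was an arbitrary countable set, $f$ depends on no countable family of coordinates, contradicting (\ref{enum13}). I expect the main obstacle to be the disjointification: a merely locally finite family need not contain an uncountable disjoint subfamily, so the passage to the star-finite family $(W_i)$ — which is exactly what turns the intersection graph into a locally finite graph and unlocks the component argument — is the step that has to be gotten right, and it must be carried out while leaving enough freedom to later select distinct coordinates through (\ref{enum14}).
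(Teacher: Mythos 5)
Your proposal is correct and follows essentially the same route as the paper: contraposition, disjointification of the uncountable locally finite family, attaching pairwise distinct coordinates via condition (iv), and summing a locally finite family of continuous bump functions that separate the two points $u_i,v_i$ agreeing off $t_i$. The only (cosmetic) differences are in the disjointification step, where you pass to a star-finite refinement and pick one index per component of the intersection graph while the paper selects indices with pairwise distinct finite ``trace'' sets $J_i$, and in using a single Urysohn-type function $\varphi_i$ in place of the paper's product $f_\alpha g_\alpha$.
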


\begin{proof}
   The implication (\ref{enum11}) $\Rightarrow$ (\ref{enum12}) follows from Theorem~\ref{th:1imp2}.

   The implication (\ref{enum12}) $\Rightarrow$ (\ref{enum13}) is obvious.

   Prove that (\ref{enum13}) $\Rightarrow$ (\ref{enum11}). Suppose that $E$ is not pseudo-$\aleph_1$-compact and choose a locally finite in $E$ family $(U_\alpha:\alpha<\omega_1)$ of non-empty open sets $U_\alpha$. Note that $U_\alpha$ may be taken to be disjoint. Indeed, let $(V_i:i\in I)$ be a locally finite family of non-empty open subsets of $E$ with $|I|>\aleph_0$.
   For every $i\in I$ we choose a non-empty open set $W_i\subseteq V_i$ and a finite set $J_i\subseteq I$ such that $W_i\subseteq\bigcap\limits_{j\in J_i}V_j$ and
   $W_i\cap V_j=\emptyset$ for all $j\in I\setminus J_i$. Since $i\in J_i$ for every $i\in I$, $\bigcup\limits_{i\in I}J_i=I$. Now we take a uncountable set $I_0\subseteq I$ such that all the sets $J_i$ from the family $(J_i:i\in I_0)$ are different. Then the uncountable family $(W_i:i\in I_0)$ consists of mutually disjoint elements.

   Since $E$ is completely regular, we may assume that all the sets $U_\alpha$ are functionally open. For every $\alpha<\omega_1$ take a continuous function $f_\alpha:E\to [0,1]$ such that $U_\alpha=f_\alpha^{-1}((0,1])$. Since $T_{U_\alpha}$ is uncountable, we may construct a family $(t_\alpha:\alpha<\omega_1)$ of distinct points $t_\alpha\in T_{U_\alpha}$. According to~(\ref{enum14})  we choose  for every $\alpha<\omega_1$ points $y^{(\alpha)}=(y^{(\alpha)}_s)_{s\in T}, z^{(\alpha)}=(z^{(\alpha)}_s)_{s\in T}\in U_\alpha$ such that $y^{(\alpha)}_{t_\alpha}\ne z^{(\alpha)}_{t_\alpha}$ and $y^{(\alpha)}_s= z^{(\alpha)}_s$ for every $s\in T\setminus\{t_\alpha\}$. Now for every $\alpha<\omega_1$ we choose a continuous function $g_\alpha:E\to [0,1]$ such that $g_\alpha(y^{(\alpha)})=1$ and $g_\alpha(z^{(\alpha)})=0$.

    Consider the continuous function $f:E\to [0,1]$, $f(x)=\sum\limits_{\alpha<\omega_1}f_\alpha(x)g_\alpha(x)$.
    Since the sets $U_\alpha$ are mutually disjoint,
    \begin{gather*}
    f(y^{(\alpha)})-f(z^{(\alpha)})=f_\alpha(y^{(\alpha)})g_\alpha(y^{(\alpha)})-f_\alpha(z^{(\alpha)})g_\alpha(z^{(\alpha)})=f_\alpha(y^{(\alpha)})>0.
    \end{gather*}
    Hence, $f(y^{(\alpha)})\ne f(z^{(\alpha)})$ for every $\alpha<\omega_1$. Since the set $\{t_{\alpha}:\alpha<\omega_1\}$ is uncountable, the function $f$ does not satisfy~(\ref{enum13}).
\end{proof}

\section{Functionally measurable sets}

\begin{proposition}\label{pr:BaireAleph}
   Let $E$ be a subset of a topological space $X=\prod\limits_{t\in T}X_t$ such that for any continuous function $f:E\to \mathbb R$ there exist a countable set $T_0\subseteq T$ and a continuous mapping $f_0:p_{T_0}(E)\to \mathbb R$ with $f=f_0\circ (p_{T_0}|_E)$ and let $0\le\alpha<\omega_1$. Then for any set $A$ of the functionally additive (multiplicative) class $\alpha$ in $E$ there exists a countable set $T_0\subseteq T$ such that $A$ depends on $T_0$ and $p_{T_0}(A)$ is of the functionally additive (multiplicative) class $\alpha$ in $p_{T_0}(E)$.
\end{proposition}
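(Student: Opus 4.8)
The plan is to argue by transfinite induction on $\alpha$, treating the additive and the multiplicative statements simultaneously (the two are intertwined, since a countable union or intersection that produces class $\alpha$ is built from sets of lower classes of \emph{both} kinds). The workhorse of the argument is an auxiliary observation that I would record first: continuous preimages respect the functional classes, i.e. if $g\colon Z\to W$ is continuous and $B$ is of functionally additive (multiplicative) class $\xi$ in $W$, then $g^{-1}(B)$ is of the same class in $Z$. This is itself a routine induction on $\xi$ — for $\xi=0$ one composes the defining bounded function with $g$ (so a functionally open/closed set pulls back to a functionally open/closed set), and the inductive step is immediate because $g^{-1}$ commutes with countable unions and intersections.

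For the base case $\alpha=0$, suppose $A$ is functionally open in $E$, say $A=E\setminus f^{-1}(0)$ with $f\in C^*(E)$. By the hypothesis $f=f_0\circ(p_{T_0}|_E)$ for a countable $T_0\subseteq T$ and a continuous $f_0\colon p_{T_0}(E)\to\mathbb R$; note $f_0$ is bounded since its range coincides with that of $f$. Then $x\in A$ iff $f_0(p_{T_0}(x))\neq 0$, so $A$ depends on $T_0$ and $p_{T_0}(A)=p_{T_0}(E)\setminus f_0^{-1}(0)$ is functionally open in $p_{T_0}(E)$. The functionally closed case is identical with $0$ in place of $\neq 0$.

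For the inductive step, assume the statement below $\alpha$ and let $A$ be of functionally additive class $\alpha$, so $A=\bigcup_n A_n$ with each $A_n$ of some class $\xi_n<\alpha$ (in the multiplicative case take $A=\bigcap_n A_n$). I would apply the inductive hypothesis to each $A_n$ to obtain a countable $T_n$ on which $A_n$ depends and with $p_{T_n}(A_n)$ of class $\xi_n$ in $p_{T_n}(E)$, then set $T_0=\bigcup_n T_n$, still countable. Since $T_n\subseteq T_0$, each $A_n$ — and hence $A$ — depends on $T_0$. Writing $\pi$ for the projection of $\prod_{t\in T_0}X_t$ onto $\prod_{t\in T_n}X_t$, so that $p_{T_n}=\pi\circ p_{T_0}$, the dependence of $A_n$ on $T_n$ yields the key identity $p_{T_0}(A_n)=(\pi|_{p_{T_0}(E)})^{-1}(p_{T_n}(A_n))$, whence by the preimage lemma $p_{T_0}(A_n)$ is of class $\xi_n$ in $p_{T_0}(E)$. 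Finally, dependence on $T_0$ gives $A_n=p_{T_0}^{-1}(p_{T_0}(A_n))\cap E$, and since $A$ too depends on $T_0$ one extracts $p_{T_0}(A)=\bigcup_n p_{T_0}(A_n)$ in the additive case and $p_{T_0}(A)=\bigcap_n p_{T_0}(A_n)$ in the multiplicative case; this is a countable union (intersection) of sets of classes $\xi_n<\alpha$, i.e. of functionally additive (multiplicative) class $\alpha$ in $p_{T_0}(E)$.

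The step I expect to be the main obstacle is the multiplicative (intersection) case: projections do not commute with intersections in general, so the equality $p_{T_0}\bigl(\bigcap_n A_n\bigr)=\bigcap_n p_{T_0}(A_n)$ cannot be taken for granted and must be forced out of the fact that each $A_n$, and the intersection $A$ itself, depend on the \emph{common} countable set $T_0$ (this is precisely why amalgamating the $T_n$ into a single $T_0$ is essential). The secondary point requiring care is the verification of $p_{T_0}(A_n)=(\pi|_{p_{T_0}(E)})^{-1}(p_{T_n}(A_n))$, whose nontrivial inclusion again rests on the dependence of $A_n$ on $T_n$.
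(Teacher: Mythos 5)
Your proof is correct and follows essentially the same route as the paper's: induction on $\alpha$, with the base case read off from the factorization hypothesis and the inductive step obtained by amalgamating the countable sets $T_n$ into one $T_0=\bigcup_n T_n$. You simply make explicit two points the paper leaves implicit — that $p_{T_0}(A_n)$ inherits its class from $p_{T_n}(A_n)$ via the preimage of the projection $\prod_{t\in T_0}X_t\to\prod_{t\in T_n}X_t$, and that the intersection case $p_{T_0}\bigl(\bigcap_n A_n\bigr)=\bigcap_n p_{T_0}(A_n)$ requires the common dependence on $T_0$ — both of which you handle correctly.
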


\begin{proof} Let $\alpha=0$. We consider the case when a set $A$ is functionally open in $E$.  Then $A=f^{-1}((0,+\infty))$ for some continuous function $f:E\to \mathbb R$. Take a countable set $T_0\subseteq T$ and a continuous mapping $f_0:p_{T_0}(E)\to \mathbb R$ with $f=f_0\circ (p_{T_0}|_E)$. Then the set $p_{T_0}(A)=f_0^{-1}((0,+\infty))$ is functionally open in $p_{T_0}(E)$. Moreover, if $x\in A$ and $y\in E$ with $p_{T_0}(x)=p_{T_0}(y)$, then $f(y)=f(x)>0$. Therefore, $y\in A$, which implies that $A$ depends on $T_0$.

Assume that the proposition is true for all $\alpha<\beta$ and consider a set $A$ of the functionally additive class $\alpha$ in $E$. Then $A=\bigcup\limits_{n=1}^\infty A_n$, where $A_n$ is of the functionally multiplicative class $\alpha_n<\alpha$ for every $n$. By the assumption for every $n$ there exists a countable set $T_n\subseteq T$ such that $A_n$ depends on $T_n$ and $p_{T_n}(A_n)$ belongs to the functionally  multiplicative  class $\alpha_n$ in $p_{T_n}(E)$. Notice that the set $p_{T_0}(A_n)$ is of the functionally  multiplicative  class $\alpha_n$ in $p_{T_0}(E)$ for every $n$. Then $p_{T_0}(A)=\bigcup\limits_{n=1}^\infty p_{T_0}(A_n)$ is of the functionally additive class $\alpha$ in $p_{T_0}(E)$.
\end{proof}

\begin{definition}
 {\rm Let $0\le\alpha<\omega_1$. A space $X$ is  {\it $\alpha$-universal}, if any subset of $X$ is $\alpha$-embedded in $X$.}
\end{definition}

Clearly, every perfectly normal space is $\alpha$-universal for any $\alpha<\omega_1$.

\begin{proposition}\label{pr:UnivSub}
   Let $0\le\alpha<\omega_1$, $(X_t)_{t\in T}$ be a family of topological spaces such that every countable subproduct is $\alpha$-universal, $X=\prod\limits_{t\in T}X_t$ and let $E\subseteq X$ be such a set as in Proposition~\ref{pr:BaireAleph}. Then $E$ is an $\alpha$-embedded set in~$X$.
\end{proposition}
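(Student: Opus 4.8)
The plan is to reduce the $\alpha$-embedding problem for $E$ inside the full product $X$ to the corresponding problem inside a \emph{countable} subproduct, where the hypothesis of $\alpha$-universality applies directly. First I would fix an arbitrary set $B$ of the functionally multiplicative class $\alpha$ in $E$; by the definition of an $\alpha$-embedded set it suffices to produce a set $A$ of the functionally multiplicative class $\alpha$ in $X$ with $A\cap E=B$. Since $E$ satisfies the hypothesis of Proposition~\ref{pr:BaireAleph}, that proposition furnishes a countable set $T_0\subseteq T$ such that $B$ depends on $T_0$ and the projection $p_{T_0}(B)$ belongs to the functionally multiplicative class $\alpha$ in $p_{T_0}(E)$. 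Note that the whole transfinite induction on $\alpha$ is already absorbed into Proposition~\ref{pr:BaireAleph}, so here I may treat it as a black box and argue uniformly in $\alpha$.

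Next I would invoke the hypothesis on the family $(X_t)$. The subproduct $\prod_{t\in T_0}X_t$ is countable, hence $\alpha$-universal, so its subset $p_{T_0}(E)$ is $\alpha$-embedded in $\prod_{t\in T_0}X_t$. Applying this to the set $p_{T_0}(B)$, which is of functionally multiplicative class $\alpha$ in $p_{T_0}(E)$, I obtain a set $A'$ of the functionally multiplicative class $\alpha$ in $\prod_{t\in T_0}X_t$ satisfying $A'\cap p_{T_0}(E)=p_{T_0}(B)$. I would then pull this set back to the whole product by setting $A=p_{T_0}^{-1}(A')$. Because the projection $p_{T_0}:X\to\prod_{t\in T_0}X_t$ is continuous and continuous preimages preserve the functionally additive and multiplicative classes, $A$ is of the functionally multiplicative class $\alpha$ in $X$.

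It remains to verify that $A\cap E=B$. The inclusion $B\subseteq A\cap E$ is immediate, since $x\in B$ gives $p_{T_0}(x)\in p_{T_0}(B)\subseteq A'$ together with $x\in E$. For the reverse inclusion, suppose $x\in A\cap E$; then $p_{T_0}(x)\in A'\cap p_{T_0}(E)=p_{T_0}(B)$, so there is $b\in B$ with $p_{T_0}(b)=p_{T_0}(x)$. At this point I would use that $B$ depends on $T_0$ in the precise form supplied by Proposition~\ref{pr:BaireAleph}: from $b\in B$, $x\in E$, and $p_{T_0}(b)=p_{T_0}(x)$ it follows that $x\in B$. This proves $A\cap E=B$, and hence that $E$ is $\alpha$-embedded in $X$.

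I expect the only genuinely delicate points to be bookkeeping rather than conceptual: one must confirm that continuous preimages send the functionally multiplicative class $\alpha$ into itself (a routine transfinite induction on $\alpha$, starting from the fact that the preimage of a functionally open set under a continuous map is functionally open, and using that preimage commutes with countable unions and intersections), and one must apply the exact ``depends on $T_0$'' relation in the final set-theoretic equality. The conceptual core, namely trading the uncountable index set $T$ for a countable $T_0$, is exactly what the dependence on countably many coordinates from Proposition~\ref{pr:BaireAleph} buys us.
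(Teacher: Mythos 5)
Your proposal is correct and follows the paper's own argument essentially verbatim: apply Proposition~\ref{pr:BaireAleph} to get a countable $T_0$ on which the set depends, use $\alpha$-universality of the countable subproduct to extend the projection, and pull back along the continuous map $p_{T_0}$. You additionally spell out the final verification $A\cap E=B$ via the ``depends on $T_0$'' relation, which the paper leaves as ``easy to see''; this is the right justification.
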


\begin{proof}
Let $A\subseteq E$ be a set of the functionally multiplicative class $\alpha$ in $E$. According to Proposition~\ref{pr:BaireAleph} there exist a countable set $T_0\subseteq T$ such that  $A$ depends on $T_0$ and $A_0=p_{T_0}(A)$ is of the functionally multiplicative class $\alpha$ in $E_0=p_{T_0}(E)$.  Since  $X_0=\prod\limits_{t\in T_0}X_t$ is $\alpha$-universal, the set $E_0$ is $\alpha$-embedded in $X_0$. Hence, there exists a set $B_0$ of the functionally multiplicative class $\alpha$ in $X_0$ such that $B_0\cap E_0=A_0$. Let $B=p_{T_0}^{-1}(B_0)$. Then $B$ is of the functionally multiplicative class $\alpha$ in $X$, because the mapping $p_{T_0}$ is continuous. Moreover, it is easy to see that $B\cap E=A$.
\end{proof}

\begin{proposition}\label{pr:alphaemb}
  Let $0\le\alpha<\omega_1$, $X=\prod\limits_{t\in T}X_t$ be a pseudo-$\aleph_1$-compact space, where $(X_t)_{t\in T}$ is a family of spaces such that every countable subproduct is $\alpha$-universal and hereditarily pseudo-$\aleph_1$-compact. Then any functionally measurable set  $E\subseteq X$ is $\alpha$-embedded in~$X$.
\end{proposition}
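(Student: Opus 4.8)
The plan is to reduce the statement to Proposition~\ref{pr:UnivSub}, i.e.\ to show that $E$ has the dependence property appearing in Proposition~\ref{pr:BaireAleph}: every continuous $f:E\to\mathbb R$ factors as $f=f_0\circ(p_{T_0}|_E)$ for some countable $T_0\subseteq T$. First I would observe that the whole product $X$ already enjoys this property. Indeed, $X$ is pseudo-$\aleph_1$-compact by hypothesis and, being the full product, it is $(\aleph_1,\aleph_0)$-invariant with respect to any point $a\in X$; hence Theorem~\ref{th:1imp2} (with $Y=\mathbb R$) applies to $X$. Since $E$ is functionally measurable, it belongs to some functionally additive or multiplicative class $\beta<\omega_1$ in $X$, so applying Proposition~\ref{pr:BaireAleph} with $X$ in the role of the ambient set and $E$ in the role of the measurable subset (for that class $\beta$) yields a countable set $T_1\subseteq T$ on which $E$ depends. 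In other words $E=p_{T_1}^{-1}(E_1)$, where $E_1:=p_{T_1}(E)\subseteq \prod_{t\in T_1}X_t$.

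The heart of the argument, and the step I expect to be the main obstacle, is to prove that $E$ is itself pseudo-$\aleph_1$-compact. The relation $E=p_{T_1}^{-1}(E_1)$ lets me regard $E$ as the \emph{full} product $\widetilde X=\prod_{t\in\widetilde T}\widetilde X_t$ over the index set $\widetilde T=\{*\}\cup(T\setminus T_1)$, where $\widetilde X_*=E_1$ and $\widetilde X_t=X_t$ for $t\in T\setminus T_1$. As a full product, $\widetilde X$ is $\aleph_0$-invariant, so I can test its pseudo-$\aleph_1$-compactness through condition~(\ref{th:char_pseudo_item2}) of Theorem~\ref{th:char_pseudo}. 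The point is that for every finite nonempty $S\subseteq\widetilde T$ the subproduct $p_S(\widetilde X)$ is a subspace of a \emph{countable} subproduct of $X$: if $*\notin S$ then $p_S(\widetilde X)=\prod_{t\in S}X_t$, while if $*\in S$ then $p_S(\widetilde X)\subseteq E_1\times\prod_{t\in S\setminus\{*\}}X_t\subseteq\prod_{t\in T_1\cup(S\setminus\{*\})}X_t$, and $T_1\cup(S\setminus\{*\})$ is countable. Since every countable subproduct is hereditarily pseudo-$\aleph_1$-compact, each $p_S(\widetilde X)$ is pseudo-$\aleph_1$-compact. Consequently, for any uncountable family $(U_i)_{i\in I}$ of nonempty open sets in $\widetilde X$ the projected family $(p_S(U_i))_{i\in I}$ consists of uncountably many nonempty open subsets of the pseudo-$\aleph_1$-compact space $p_S(\widetilde X)$ and therefore cannot be locally finite. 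This verifies condition~(\ref{th:char_pseudo_item2}), so Theorem~\ref{th:char_pseudo} gives that $\widetilde X$, and hence $E$, is pseudo-$\aleph_1$-compact. This is precisely where the hereditary pseudo-$\aleph_1$-compactness hypothesis is indispensable.

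To conclude, I would note that $E=\widetilde X$ is a full product, hence $(\aleph_1,\aleph_0)$-invariant with respect to any of its points, and it is pseudo-$\aleph_1$-compact by the previous step. Theorem~\ref{th:1imp2} applied to $E$ then shows that every continuous $f:E\to\mathbb R$ factors through the projection onto countably many coordinates of $\widetilde T$; re-indexing back to $T$, and absorbing the countable block $T_1$ attached to the coordinate $*$, produces a countable $T_0\subseteq T$ with $f=f_0\circ(p_{T_0}|_E)$. Thus $E$ is a set of the kind required in Proposition~\ref{pr:UnivSub}. Since every countable subproduct of $X$ is $\alpha$-universal, Proposition~\ref{pr:UnivSub} finally yields that $E$ is $\alpha$-embedded in $X$, as desired.
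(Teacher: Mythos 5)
Your proof is correct and follows the same skeleton as the paper's: first show that $E$ depends on a countable set $T_1$ of coordinates, so that $E=E_1\times\prod_{t\in T\setminus T_1}X_t$; then show this product is pseudo-$\aleph_1$-compact using the hereditary pseudo-$\aleph_1$-compactness of countable subproducts; finally apply Theorem~\ref{th:1imp2} and Proposition~\ref{pr:UnivSub}. The difference is in which lemmas carry the two auxiliary steps. The paper imports both from Noble--Ulmer: their Theorem~2.3 for the countable dependence of the Baire set $E$, and their Corollary~1.5 (pseudo-$\aleph_1$-compactness of a product is detected on finite subproducts) for the pseudo-$\aleph_1$-compactness of $E$. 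You instead derive the dependence internally, by applying Theorem~\ref{th:1imp2} to the full product $X$ (which is trivially $(\aleph_1,\aleph_0)$-invariant) and then feeding the resulting factorization property into Proposition~\ref{pr:BaireAleph} with $A=E$; and you replace Corollary~1.5 by the paper's own Theorem~\ref{th:char_pseudo}, verifying condition~(\ref{th:char_pseudo_item2}) via the openness of projections and the fact that each finite subproduct of $\widetilde X$ sits inside a countable subproduct of $X$. This buys a self-contained argument at the cost of two small bookkeeping points you should make explicit: that an uncountable \emph{indexed} family $(p_S(U_i))_{i\in I}$ of nonempty open sets in a pseudo-$\aleph_1$-compact space fails to be locally finite even when only countably many distinct sets occur among its members (a repeated set already witnesses a cluster point), and that the continuity of $f_0$ on $p_{T_0}(E)$ after re-indexing follows from the continuity of the natural map $p_{T_0}(E)\to p_{\widetilde T_0}(E)$ absorbing the block $T_1$ into the coordinate~$*$.
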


\begin{proof}
 Consider a functionally measurable set $E\subseteq X$. Without loss of generality, we may assume that $E$ belongs to the functionally multiplicative class $\beta$ for some $0\le\beta<\omega_1$. Take a function $f\in B_\beta(X)$ such that $E=f^{-1}(0)$. Since $X$ is pseudo-$\aleph_1$-compact, Theorem~2.3 from \cite{NobleUlmer} implies that there exists s countable set $T_0\subseteq T$ such that for all $x\in E$ and $y\in X$ the equality $p_{T_0}(x)=p_{T_0}(y)$ implies that $y\in E$. Let $E_0=p_{T_0}(E)$. Then
 $$
 E=E_0\times\prod\limits_{t\in T\setminus T_0}X_t.
 $$
Since $\prod\limits_{t\in T_0\cup S} X_t$ is a hereditarily pseudo-$\aleph_1$-compact space, $E_0\times \prod\limits_{t\in S} X_t$ is pseudo-$\aleph_1$-compact space for any finite set $S\subseteq T\setminus T_0$. Hence, by \cite[Corollary 1.5]{NobleUlmer} the set $E$ is pseudo-$\aleph_1$-compact. Therefore, $E$ satisfy the condition of Proposition~\ref{pr:BaireAleph} by Theorem~\ref{th:1imp2} applied to the whole product $E_0\times\prod\limits_{t\in T\setminus T_0}X_t$. It remains to use Proposition~\ref{pr:UnivSub}.
\end{proof}

The following result implies a positive answer to Question~8.1 from~\cite{KarlovaCMUC}.

\begin{corollary}
Let $(X_t)_{t\in T}$ be a family of separable metrizable spaces. Then every functionally measurable subset of $X=\prod\limits_{t\in T} X_t$ is  $\alpha$-embedded in $X$ for any  $0\le\alpha<\omega_1$.
\end{corollary}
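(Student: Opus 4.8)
The plan is to derive the corollary directly from Proposition~\ref{pr:alphaemb}. To invoke that proposition with the given product $X=\prod_{t\in T}X_t$, I must check its three hypotheses: that $X$ is pseudo-$\aleph_1$-compact, that every countable subproduct is $\alpha$-universal, and that every countable subproduct is hereditarily pseudo-$\aleph_1$-compact. Each $X_t$ is separable and metrizable, hence second countable, and everything will be deduced from this single observation.

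I would first dispose of the two conditions concerning countable subproducts. Fix a countable $T_0\subseteq T$ and set $X_0=\prod_{t\in T_0}X_t$. A countable product of separable metrizable spaces is again separable and metrizable: metrizability is the standard product-metric argument, and separability follows by taking the countable set of points that agree with a fixed base point off a finite set and assume values in the given countable dense subsets elsewhere. Consequently $X_0$ is perfectly normal, so it is $\alpha$-universal for every $\alpha<\omega_1$ by the remark following the definition of $\alpha$-universality. Moreover $X_0$ is second countable, hence hereditarily Lindel\"of; since in a Lindel\"of space every locally finite family of nonempty open sets is countable (cover the space by neighborhoods each meeting finitely many members and extract a countable subcover), every subspace of $X_0$ is pseudo-$\aleph_1$-compact, i.e.\ $X_0$ is hereditarily pseudo-$\aleph_1$-compact.

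The remaining, and main, point is the pseudo-$\aleph_1$-compactness of the full product $X$; this is the genuine obstacle, because for an uncountable index set $T$ the space $X$ need not be separable and so the easy Lindel\"of argument is unavailable. Here I would apply Theorem~\ref{th:char_pseudo} to $E=X$, which is trivially $\aleph_0$-invariant with respect to any $a\in X$, and verify condition~(\ref{th:char_pseudo_item2}). Given a finite nonempty $S\subseteq T$ and an uncountable family $(U_i:i\in I)$ of nonempty open sets in $X$, the projections $p_S(U_i)$ form an uncountable family of nonempty open sets in $p_S(X)=\prod_{t\in S}X_t$, which is a finite product of separable spaces and therefore separable. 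A separable space is pseudo-$\aleph_1$-compact, and in such a space no uncountable family of nonempty open sets can be locally finite (every member meets the countable dense set, yet each point of that set lies in only finitely many members, so $I$ would be countable). Thus $(p_S(U_i):i\in I)$ is not locally finite, condition~(\ref{th:char_pseudo_item2}) holds, and Theorem~\ref{th:char_pseudo} yields that $X$ is pseudo-$\aleph_1$-compact.

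With all three hypotheses verified, Proposition~\ref{pr:alphaemb} applies and shows that every functionally measurable subset of $X$ is $\alpha$-embedded for each $0\le\alpha<\omega_1$, which is the assertion. Alternatively, the pseudo-$\aleph_1$-compactness of $X$ may be quoted from \cite[Corollary~1.5]{NobleUlmer}, reducing it to the finite subproducts directly.
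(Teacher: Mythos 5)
Your proof is correct and follows essentially the same route as the paper: both reduce the corollary to Proposition~\ref{pr:alphaemb} via the observation that countable subproducts of separable metrizable spaces are separable metrizable, hence perfectly normal ($\alpha$-universal) and hereditarily Lindel\"of (hereditarily pseudo-$\aleph_1$-compact). The only difference is that you explicitly verify the pseudo-$\aleph_1$-compactness of the full product $X$ (via Theorem~\ref{th:char_pseudo} or \cite[Corollary~1.5]{NobleUlmer}), a hypothesis of Proposition~\ref{pr:alphaemb} that the paper's one-line proof leaves tacit; this is a welcome addition rather than a divergence.
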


\begin{proof}
It follows from Proposition~\ref{pr:alphaemb} and the fact that any countable product of separable metrizable spaces is separable and metrizable, consequently, $\alpha$-universal and hereditarily pseudo-$\aleph_1$-compact.
\end{proof}

\section{The construction of $\alpha$-embedded sets}

\begin{theorem}
For every $0\le\alpha<\omega_1$ there exist a completely regular space $X$ with an $(\alpha+1)$-embedded subspace $E\subseteq X$ which is not $\alpha$-embedded.
\end{theorem}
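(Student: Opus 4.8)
The plan is to realise $E$ as a subspace of a product $X=\prod_{t\in T}X_t$ over an uncountable index set $T$, in the spirit of the $\alpha=0$ construction of \cite{KarlovaCMUC} and of Example~\ref{ex:1}. The ambient product $X$ is to be completely regular and pseudo-$\aleph_1$-compact, whereas the witnessing subspace $E$ will \emph{fail} to be pseudo-$\aleph_1$-compact. The mechanism producing all the non-triviality is an uncountable locally finite family inside $E$, and to manufacture one inside a space that is nonetheless separable I would keep a distinguished factor $X_{t_0}$ equal to the Niemytzki plane (as in Example~\ref{ex:1}): its $x$-axis is an uncountable closed discrete subset sitting inside a separable space. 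The remaining factors are taken to be simple (two-point sets, $\mathbb N$, or $[0,1]$), chosen so that every countable subproduct stays separable, hence pseudo-$\aleph_1$-compact; by the product criterion behind Theorem~\ref{th:char_pseudo} and \cite[Corollary~1.5]{NobleUlmer} the whole of $X$ is then pseudo-$\aleph_1$-compact, while complete regularity is inherited from the factors.

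The core object is a distinguished set $D\subseteq E$ of functionally multiplicative class exactly $\alpha$ in $E$ whose projection onto a countable coordinate set has class exactly $\alpha+1$. I would index the Niemytzki axis by the essential coordinates of $D$ — the role played by the points $y^{(t)}$ in Example~\ref{ex:1} — and build $D$ from basic functionally open and closed pieces by a prescribed transfinite scheme of countable unions and intersections of height $\alpha$; the point is that spreading this scheme over uncountably many coordinates and then collapsing to a countable set $T_0$ raises the functionally multiplicative class of the image $D_0=p_{T_0}(D)$ in $E_0=p_{T_0}(E)$ by exactly one level, the increase being caused by the non-closedness of the restricted projection $p_{T_0}|_E$. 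The parameter $\alpha$ is reached by transfinite recursion: at a successor step one adjoins a further convergent-sequence coordinate imposing one extra countable union (or intersection), and at a limit step one amalgamates the schemes for a cofinal sequence $\alpha_n\nearrow\alpha$; the base case $\alpha=0$ is the functionally closed set whose projection is a functional $G_\delta$ but not functionally closed, as in \cite{KarlovaCMUC}.

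Both embedding statements then reduce to a countable subproduct through the dependence-on-countably-many-coordinates machinery. For non-$\alpha$-embeddedness, suppose $D=C\cap E$ with $C$ of functionally multiplicative class $\alpha$ in $X$. Since $X$ is pseudo-$\aleph_1$-compact and, as a full product, trivially $(\aleph_1,\aleph_0)$-invariant, Theorem~\ref{th:1imp2} together with Proposition~\ref{pr:BaireAleph} (applied with $X$ in the role of $E$) provides a countable $T_0$ on which $C$ depends and for which $C_0=p_{T_0}(C)$ has class $\alpha$ in $X_0=p_{T_0}(X)$. As $C$ depends on $T_0$ one gets $D_0=C_0\cap E_0$, exhibiting $D_0$ as a class-$\alpha$ trace in $X_0$ and contradicting that $D_0$ is genuinely of class $\alpha+1$ in $E_0$. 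For $(\alpha+1)$-embeddedness I would take any $B$ of functionally multiplicative class $\alpha+1$ in $E$ and show \emph{directly} from the structure of $E$ that $B$ depends on a countable $T_0$ with $B_0=p_{T_0}(B)$ of class at most $\alpha+1$ in $E_0$; here Proposition~\ref{pr:BaireAleph} is unavailable for $E$ itself (it is not pseudo-$\aleph_1$-compact), so the controlled nature of the non-niceness must be used to cap the jump at a single level. Granting this, $(\alpha+1)$-universality of the countable subproduct $X_0$ yields a class-$(\alpha+1)$ set in $X_0$ tracing $B_0$ on $E_0$, and its $p_{T_0}$-preimage is a class-$(\alpha+1)$ set in $X$ meeting $E$ in $B$, exactly as in Propositions~\ref{pr:UnivSub} and~\ref{pr:alphaemb}.

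The main obstacle is the Baire-class bookkeeping: one must design $D$ so that collapsing uncountably many coordinates to a countable subproduct raises its class by \emph{exactly} one, so that $D_0$ is genuinely of class $\alpha+1$ and not $\alpha$ (this makes $\alpha$-embeddedness fail), while simultaneously proving the matching upper bound that no subset of class $\alpha+1$ in $E$ is pushed above class $\alpha+1$ under the same collapse (this makes $(\alpha+1)$-embeddedness hold), together with $(\alpha+1)$-universality of the relevant countable subproducts. Establishing the lower bound ``class exactly $\alpha+1$'' and the uniform upper bound on the jump is the technical heart of the argument; the separable Niemytzki factor is precisely what permits the required uncountable, locally finite spreading inside a pseudo-$\aleph_1$-compact ambient product.
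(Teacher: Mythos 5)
Your proposal is a plan rather than a proof: the object that is supposed to carry all the difficulty --- a set $D$ of functionally multiplicative class exactly $\alpha$ in $E$ whose collapse to a countable subproduct has class exactly $\alpha+1$, produced by an unspecified ``transfinite scheme spread over uncountably many coordinates'' --- is never constructed, and you yourself defer both the lower bound (class exactly $\alpha+1$ after collapsing) and the matching upper bound (no class-$(\alpha+1)$ subset of $E$ is pushed higher) as ``the technical heart''. That deferral is precisely where the theorem lives. Moreover, the mechanism you posit is doubtful as stated: projections $p_{T_0}$ do not act on functionally multiplicative classes in a controlled ``$+1$'' fashion, and Proposition~\ref{pr:BaireAleph} --- the only tool in the paper relating a set to its projection --- shows that whenever it applies the class does \emph{not} increase at all. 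Nothing in your outline explains why the increase should be exactly one level, nor why every set of class $\alpha+1$ in $E$ should survive the collapse with class at most $\alpha+1$. The Niemytzki factor is also a red herring: it belongs to Example~\ref{ex:1}, which makes an unrelated point, and your requirement that $E$ itself fail pseudo-$\aleph_1$-compactness is the opposite of what the actual construction uses (there the two pieces of $E$ are each pseudo-$\aleph_1$-compact; what fails for $E$ as a whole is invariance, not pseudo-$\aleph_1$-compactness).

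The idea you are missing is that the class jump need not be manufactured by the projection at all: it can be imported from the classical hierarchy on $[0,1]$. The paper takes $A_1\subseteq[0,1]$ of additive class $\alpha$ but not of multiplicative class $\alpha$ (Kuratowski), sets $A_2=[0,1]\setminus A_1$, and uses Stone's disjoint closed sets $F_1,F_2\subseteq\mathbb N^{(0,1]}$, whose crucial feature is that their projections onto any countable set of coordinates overlap. Then $E=(A_1\times F_1)\cup(A_2\times F_2)$ inside $X=[0,1]\times\mathbb N^{(0,1]}$. The set $B_1=A_1\times F_1$ is \emph{clopen} in $E$, hence of functionally multiplicative class $\alpha$ in $E$ for free; but any ambient $H$ of multiplicative class $\alpha$ with $H\cap E=B_1$ depends on countably many coordinates, and on a countable subproduct $F_1$ and $F_2$ cannot be told apart, which forces $A_1\times\{y_0\}=([0,1]\times\{y_0\})\cap H$ and hence $A_1$ of multiplicative class $\alpha$ --- a contradiction. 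For $(\alpha+1)$-embeddedness no uniform bound on a ``jump'' is proved: each $B_i$ is separately $(\aleph_1,\aleph_1)$-invariant and pseudo-$\aleph_1$-compact, hence $\alpha$-embedded in $X$ by Theorem~\ref{th:1imp2} and Proposition~\ref{pr:UnivSub}, and a set $C$ of class $\alpha+1$ in $E$ is extended by patching extensions of $C\cap B_1$ and $C\cap B_2$ together along the ambient sets $A_1\times Y$ (additive class $\alpha$) and $A_2\times Y$ (multiplicative class $\alpha$). Without something playing the role of $A_1$ and of the unseparated pair $F_1,F_2$, your outline cannot be completed.
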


\begin{proof}
Fix $\alpha<\omega_1$. Let $X_0=[0,1]$, $X_t=\mathbb N$ for every $t\in (0,1]$, $Y=\prod\limits_{t\in (0,1]}X_t$ and
$X=[0,1]\times Y=\prod\limits_{t\in [0,1]}X_t$.

According to~\cite[p.~371]{Ku1} there exists a set $A_1\subseteq [0,1]$ of the additive class $\alpha$ which does not belong to the multiplicative class $\alpha$. Let $A_2=[0,1]\setminus A_1$.

For $i=1,2$ put
$$
F_i=\bigcap\limits_{n\ne i}\{y=(y_t)_{t\in (0,1]}\in Y: |\{t\in (0,1]: y_t=n\}|\le 1\}.
$$
It is easy to see that $F_1$ and $F_2$ are closed disjoint subsets of $Y$.
Let  $B_i=A_i\times F_i$ for $i=1,2$ and $E=B_1\cup B_2$.
Then $B_1$ and $B_2$ are disjoint closed subsets of $E$.

\bigskip
{\it Claim 1.} {\it The set $B_i$ is $\alpha$-embedded in $X$ for every $i=1,2$.}

{\it Proof.} We show that $B_1$ is pseudo-$\aleph_1$-compact (for the set $B_2$ we argue completely similarly). Since $A_1$ is separable, it is enough to check that $F_1$ is pseudo-$\aleph_1$-compact. Notice that the set $F_1$ is $(\aleph_1,\aleph_1)$-invariant with respect to the point $a=(a_t)_{t\in(0,1]}$, where $a_t=1$ for every $t\in(0,1]$. Since for any finite set $S\subseteq (0,1]$ the space $\prod\limits_{t\in S}X_t$ is countable, the set $F_1$ satisfies condition~(\ref{th:char_pseudo_item2}) of Theorem~\ref{th:char_pseudo}. Then by Theorem~\ref{th:char_pseudo} the set $F_1$ is pseudo-$\aleph_1$-compact.

Now observe that each set $B_i$ is $(\aleph_1,\aleph_1)$-invariant with respect to the point \mbox{$a^i=(a_t^i)_{t\in [0,1]}$}, where $a_t^i=i$ for all $t\in (0,1]$ and $a_0^i\in A_i$. It remains to apply Theorem~\ref{th:1imp2} and Proposition~\ref{pr:UnivSub}.

\bigskip
{\it Claim 2.} {\it  The set $E$ is not $\alpha$-embedded in $X$.}

{\it Proof.} Assume the contrary and choose a set $H$ of the functionally multiplicative class $\alpha$ in $X$ such that $H\cap E=B_1$. It follows from Proposition~\ref{pr:BaireAleph}  that there is a countable set $S=\{0\}\cup T$, where $T\subseteq (0,1]$, such that $H$ depends on $S$. Let $y_0\in Y$ be such a point that $p_T(y_0)$ is a sequence of distinct natural numbers which are not equal to $1$ or $2$. Take $y_1\in F_1$ and $y_2\in F_2$ with $p_T(y_0)=p_T(y_1)=p_T(y_2)$. Then for all $x\in A_1$ we have $(x,y_1)\in H$ and, consequently, $(x,y_0)\in H$. Moreover, for all $x\in A_2$ we have $(x,y_2)\not\in H$ and, consequently, $(x,y_0)\not\in H$. Hence, $A_1\times\{y_0\}=([0,1]\times\{y_0\})\cap H$. Therefore, $A_1\times \{y_0\}$ is of the functionally multiplicative class $\alpha$ in $X$, which implies that the set $A_1$ belongs to the functionally multiplicative class $\alpha$ in $[0,1]$, a contradiction.

\bigskip
{\it Claim 3.} {\it The set $E$ is $(\alpha+1)$-embedded in $X$.}

{\it Proof.} Let $C$ be a set of the functionally multiplicative class $(\alpha+1)$ in $E$. Denote
$E_i=A_i\times Y$ for $i=1,2$. Then $E_1$ is of the functionally additive class $\alpha$ and $E_2$ is of the functionally multiplicative class $\alpha$ in~$X$.
For $i=1,2$ put $C_i=C\cap B_i$. Since each of the sets $C_i$ is of the functionally multiplicative class $(\alpha+1)$ in the $\alpha$-embedded set $B_i$ in $X$, there exists a set $D_i$ of the functionally multiplicative class $(\alpha+1)$ in  $X$ such that $D_i\cap
B_i=C_i$.  Let $D=(D_1\cap E_1)\cup (D_2\cap E_2)$. Then $D$ is a set of the functionally multiplicative class $(\alpha+1)$ in $X$ and $D\cap E=C$.
\end{proof}

Notice that the sets $F_i$ were first defined by A. Stone \cite{Stone1} in his proof of non-normality of the uncountable power $\mathbb N^\tau$ of the space $\mathbb N$ of natural numbers.

\section*{Acknowledgement}
The authors express gratitude to the referees for careful reading and comments that helped to improve the paper.

\end{document}